\documentclass{amsart}

\usepackage{amsmath, amssymb, amsthm, mathtools, mathrsfs, hyperref}
\usepackage{cleveref}
\usepackage{stmaryrd}
\usepackage{enumitem}

\usepackage{todonotes}

\usepackage[sc]{mathpazo}
\linespread{1.075}         
\usepackage[T1]{fontenc}

\usepackage[margin = 1.35 in]{geometry}


\usepackage{multicol}

\usepackage{tikz}
\usetikzlibrary{shapes, arrows.meta, positioning}

\usepackage{xparse}

\usepackage{cite}

\usepackage{xcolor}


\newtheorem{theorem}{Theorem}[section]
\newtheorem{lemma}[theorem]{Lemma}

\newtheorem{proposition}[theorem]{Proposition}

\newtheorem{problem}[theorem]{Problem}

\theoremstyle{definition}
\newtheorem{definition}[theorem]{Definition}

\newtheorem{setting}[theorem]{Setting}
\newtheorem{remark}[theorem]{Remark}


\DeclareMathOperator{\Pic}{Pic}

\DeclareMathOperator{\Ram}{Ram}

\DeclareMathOperator{\length}{length}

\DeclareMathOperator{\rk}{rk}

\renewcommand{\P}{\mathbb{P}}

\newcommand{\G}{\mathbb{G}}
\newcommand{\Z}{\mathbb{Z}}

\renewcommand{\phi}{\varphi}
\newcommand{\wt}[1]{\widetilde{#1}}
\renewcommand{\bar}[1]{\overline{#1}}

\title{Bounding Pinch Point Schemes of Projected Surfaces}

\author{Adam Cartisano, Anand Patel}


\begin{document}

\begin{abstract}
	Let $X$ be a smooth surface and let $\phi:X\to\P^N$, with $N\geq 4$, be a finitely ramified map which is birational onto its image $Y = \phi(X)$, with $Y$ non-degenerate in $\P^N$. In this paper, we produce a lower bound for the length of the pinch point scheme of a general linear projection of $Y$ to $\P^3.$ We then prove that the lower bound is realized if and only if $Y$ is a rational normal scroll.
\end{abstract}

\maketitle

\section{Introduction}

The classical General Projection Theorem (see \cite[Theorem 2.5]{ciliberto2011branch}, \cite[Section 2]{piene2005singularities},\cite[Section 1]{piene1978some},\cite[Theorem 1]{roberts1971generic}) states that there are at most three types of singularities (often called \textit{ordinary singularities}) contained in the image of a general linear projection of a smooth surface to $\P^3$. These include: A curve of double points, finitely many triple points, and finitely many pinch points. It is natural to wonder what types of numerical limitations these singularities admit.

Let $X$ be a smooth projective surface in $\P^N$. In this paper, we focus on the third type of singularity, pinch points. In particular, we present a lower bound for the length of the \textit{pinch point scheme} of a general projection of $X$ to $\P^3$. The length of this scheme describes the geometry of $X$ as it sits inside $\P^N$. While there is clearly no upper bound on the pinch point scheme length, we will demonstrate a lower bound in terms of $N$.

\begin{theorem}
    \label{theorem:simplified} Let $X \subset \P^{N}$ be a non-degenerate, smooth projective surface with $N \geq 4$. Then the number of pinch points of a general projection of $X$ to $\P^{3}$ is at least $2N-6$, with equality holding if and only if $X$ is a rational normal scroll.
\end{theorem}

The numerical invariant studied in this paper is naturally a projective character. If $X\subset\P^N$ is a smooth embedded surface, its Gauss map $\mathfrak g:X\to\G(2,N)$ determines a cycle class
$$\mathfrak g_*[X]=\gamma_2\sigma_2^*+\gamma_{1,1}\sigma_{1,1}^*.$$
The coefficient $\gamma_{1,1}$ is the class of $X$, equivalently the degree of its dual variety (when the dual is a hypersurface), while $\gamma_2$ is the length of the pinch-point scheme arising from a general projection of $X$ to $\P^3$. Thus Theorem \ref{theorem:simplified} gives the sharp lower bound $\gamma_2\geq2N-6,$ with equality precisely for rational normal scrolls. In the proof, we work more generally with maps $\phi:X\to\P^N$ that need not be embeddings. In that setting, we continue to write $\gamma_2$ for the corresponding Chern-class expression, which still computes the length of the ramification scheme of a general projection. 

\begin{remark} We make a few remarks to properly contextualize the theorem.  
\begin{enumerate}
    \item The analogous statement for curves is an immediate consequence of the Riemann--Hurwitz formula. Let $X\subset\P^N$ be a non-degenerate smooth curve of degree $d$ and genus $g(X)$. A general projection $X\to\P^1$ has degree $d$, and its ramification divisor has degree $2d+2g(X)-2$. Since $d\geq N$ and $g(X)\geq0$, it has at least $2N-2$ ramification points, counted with multiplicity, with equality if and only if $X\subset\P^N$ is a rational normal curve of degree $N$.
    
    \item Theorem \ref{theorem:simplified} should be compared with a result of E. Marchionna \cite{marchionna1955sopra} from the 1950s: if $X\subset\P^N$, with $N\geq4$, is a non-degenerate linearly normal smooth surface of degree $d$, then the number of singular elements in a general pencil of hyperplane sections of $X$ is at least $d-1$, with equality if and only if $X$ is isomorphic to the Veronese surface in $\P^5$. Hence, for $N\geq5$, this number is at least $N-2$, with equality if and only if $N=5$ and $X$ is isomorphic to the Veronese surface. Moreover, this number is $N-1$ if and only if $X\subset\P^N$ is a smooth rational normal scroll. The methods of our proof are entirely different from those used by Marchionna.

    \item The character $\gamma_2$ admits two descriptions, one via Chern classes and the other via the tangent variety. By \cite[Proposition 12.6]{eisenbud20163264},
    $$\gamma_2=\deg\left(6\zeta^2+4\zeta K_X+K_X^2-c_2(T_X)\right),$$
    a formula from which the lower bound $2N-6$ is not apparent. Let $\Theta_X\subset\P^N$ be the tangent variety and let $\eta$ be the natural map from the abstract tangential variety onto $\Theta_X$. If $\dim\Theta_X=4$, then $\eta$ is generically finite and  $\gamma_2=\deg(\eta)\deg\Theta_X.$ Here $\deg(\eta)$ is the tangent degree. Hernandez Gomez and Russo study both factors in \cite{gomez2026tangentdegreedegreetangent} and prove, when $\Theta_X\subsetneq\P^N$, that $\deg\Theta_X\geq2(N-\dim\Theta_X+1).$
\end{enumerate}
\end{remark}

In this paper, we prove the stronger Theorem \ref{thm:ppbound}, in which the embedded surface $X\subset\P^N$ is replaced by an arbitrary uncrumpled map $\phi:X\to\P^N$; see Definition \ref{def:uncrumpled}. This extends the usual setting of maps obtained from embedded surfaces by linear projection. The broader formulation is also well adapted to our inductive use of general inner projections.

The rest of the paper will proceed as follows. In Section \ref{sec:defncon}, we list out the specific definitions, hypotheses and conventions used throughout the rest of the paper, along with some preliminary results needed later on. Section \ref{sec:mainproof} contains the proof for Theorem \ref{thm:ppbound}, preceded by a series of constituent lemmas. In Section \ref{sec:generalization}, we generalize our result to classify, for $N$ large enough, the surfaces which have a near-minimal pinch point scheme length. Finally, in Section \ref{sec:futurework} we discuss future directions. Our work points to many natural problems in the geometry of projective varieties, and we hope interested readers will pursue them in the future. 

\section{Definitions and Conventions}
\label{sec:defncon}
Throughout what follows, we work over an algebraically closed field $k$ of characteristic $0$. The word \textit{variety} will be taken to mean a reduced finite type $k$-scheme, and the words \textit{curve} and \textit{surface} will refer to varieties of dimension one and two respectively. We adopt the pre-Grothendieck convention for projective spaces, i.e. $\P_k^N$ will refer to the set of one-dimensional subspaces of $k^{N+1}$.

Let $X$ be an irreducible smooth surface. For a map $\phi:X\to \P^N$, we write $d\phi:T_X\to \phi^*T_{\P^N}$ for the map on tangent bundles induced by $\phi$, and we denote by $\Ram(\phi)$ the ramification scheme of $\phi$, i.e. the subscheme of $X$ for which $\rk(d\phi)\leq 1.$

\begin{definition}
    \label{def:uncrumpled}
    For a smooth projective surface $X$, we call a map $\phi:X\to\P^N$ \textbf{uncrumpled} if 
    \begin{enumerate}
        \item $\phi(X)\subset\P^N$ is non-degenerate, i.e. not contained in a hyperplane,
        \item $X$ is birational onto its image $\phi(X)$, and
        \item $\Ram(\phi)$ is finite.
    \end{enumerate}
\end{definition}

Observe in particular that an uncrumpled map $\phi$ is necessarily finite onto its image $\phi(X)$. Since $X$ is smooth, it follows that $X$ is the normalization of $\phi(X)$.

\begin{definition}
    A surface $X\subset\P^N$ is \textbf{ruled by lines} if its Fano scheme is positive-dimensional.
\end{definition}

\begin{setting}[The Inner Projection Setting]
\label{innerprojectionsetting}
Let $X$ be an irreducible smooth surface, and let ${\phi:X\to\P^N}$ be an uncrumpled map whose image we denote $Y = \phi(X)$, and suppose $N \geq 4$. Let $x\in X$ be a generally chosen point, and put $y = \phi(x)$ (so that $y$ is a general point of $Y$). Let $\beta_x:\wt{X}\to X$ be the blow-up of $X$ at $x$, let $E = \beta_x^{-1}(x)$ be the exceptional curve, and let $\beta_y:\wt{Y}\to Y$ the blow-up of $Y$ at $y$. Next, let $\pi_y:Y\dashrightarrow\P^{N-1}$ be the \textit{inner projection from $y$}, i.e. the effect of linear projection from the point $y$. Let $\pi_x = \pi_y\circ\phi:X\dashrightarrow\P^{N-1}$ be the \textit{inner projection from $x$.} Let $\wt{\phi}:\wt{X}\to\P^{N-1}$ and $\wt{\pi_y}:\wt{Y}\to\P^{N-1}$ be the resolutions of $\pi_x$ and $\pi_y$ to the corresponding blow-ups. We use $\zeta$ in the Chow ring $A(X)$ and $\wt{\zeta}\in A(\wt{X})$ to denote the pullback of the hyperplane class along $\phi$ and $\wt{\phi}$ respectively and set $d = \deg Y = \zeta^2$. Lastly, we define the \textit{pinch point scheme length} of $\phi$ to be the quantity given by the intersection formula as in \cite{eisenbud20163264}
\begin{equation}
\label{equation:pinch}
\gamma_2 = \deg\left(6\zeta^2 + 4\zeta K_X + K_X^2 - c_2(T_X)\right).
\end{equation}
\end{setting}
We write $\wt\gamma_2$ for the corresponding invariant of the resolved inner projection $\wt\phi$.

Setting \ref{innerprojectionsetting} can be summarized in the following diagram:

\begin{center}
\begin{tikzpicture}[>=stealth]
    \node (Xt) at (0,1.75) {$\wt X$};
    \node (Yt) at (3,1.75) {$\wt Y$};
    \node (X)  at (0,0) {$X$};
    \node (Y)  at (3,0) {$Y$};
    \node (P)  at (1.5,-1.8) {$\P^{N-1}$};

    \draw[->] (Xt) -- node[right] {$\beta_x$} (X);
    \draw[->] (Yt) -- node[left] {$\beta_y$} (Y);
    \draw[->] (X) -- node[above] {$\phi$} (Y);
    \draw[->,dashed] (X) -- node[below left] {$\pi_x$} (P);
    \draw[->,dashed] (Y) -- node[below right] {$\pi_y$} (P);
    \draw[->] (Xt.west) to[bend right=65] node[left] {$\wt\phi$} (P.west);
    \draw[->] (Yt.east) to[bend left=65] node[right] {$\wt\pi_y$} (P.east);
\end{tikzpicture}
\end{center}

\begin{remark}
Let $\pi:Y\to\P^3$ be a general linear projection and put $f=\pi\circ\phi$. Then $\Ram(f)=D_1(df)$ is zero-dimensional, hence has the expected codimension $2$. By the Thom--Porteous formula, $[\Ram(f)]=c_2(f^*T_{\P^3}-T_X),$ whose degree is the expression in Equation \eqref{equation:pinch}. Thus $\gamma_2=\length\Ram(f)$. If $\phi$ is an embedding, the General Projection Theorem identifies the reduced points of this scheme with the pinch points of $\pi(Y)$.
\end{remark}

With the notion of an uncrumpled map in our hands, we can state the more general version of the main theorem.

\begin{theorem}
\label{thm:ppbound}
Let $X$ be a smooth surface, and let $\phi:X\to\P^N$ be an uncrumpled map to $Y = \phi(X)$ with $N\geq 4$. Then for a general linear projection $\pi:Y\to\P^3$, the length of the pinch point scheme of the composition $X \to \P^{3}$ is bounded below by $2N-6$ with equality holding if and only if $Y$ is a rational normal scroll.
\end{theorem}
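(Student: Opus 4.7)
The plan is to induct on $N$ via inner projection from a general point of $Y$. Fix a general $p \in Y$, let $q = \phi^{-1}(p)$, blow up $\sigma \colon \tilde X \to X$ at $q$ with exceptional divisor $E$, and consider the morphism $\tilde\phi \colon \tilde X \to \P^{N-1}$ attached to the divisor class $\tilde\zeta = \sigma^*\zeta - E$. Using $K_{\tilde X} = \sigma^* K_X + E$, $E^2 = -1$, and $c_2(T_{\tilde X}) = c_2(T_X) + 1$, formula (\ref{ppformula}) yields
\begin{equation*}
    \pin{\pi' \circ \tilde\phi} \;=\; \pin{\pi \circ \phi} - 4,
\end{equation*}
whenever $\tilde\phi$ is uncrumpled, where $\pi'$ is a general projection of $\tilde\phi(\tilde X)$ to $\P^3$. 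This identity is the engine of the induction; the base case $N = 4$ is handled either directly by a numerical argument or by reduction to the scroll computation in Case B below.

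The inductive step splits into two cases. Case A: $\tilde\phi$ is uncrumpled for some general $p \in Y$. Non-degeneracy of $Y$ in $\P^N$ implies non-degeneracy of the inner projection in $\P^{N-1}$, so the inductive hypothesis gives $\pin{\pi' \circ \tilde\phi} \geq 2(N-1) - 6$, whence $\pin{\pi\circ\phi} \geq 2N - 4 > 2N - 6$ and we sit strictly above the equality case. Case B: $\tilde\phi$ fails to be uncrumpled for every general $p \in Y$. The generic failure mode is that $\tilde\phi$ contracts a curve: for $q$ a smooth point and an irreducible curve $C \subset \tilde X$ meeting $E$ transversally, $\tilde\zeta \cdot C = 0$ forces $\phi(C) \subset Y$ to be a line through $p$. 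Hence through every general point of $Y$ lies a line on $Y$, so $Y$ is covered by lines; non-degeneracy in $\P^N$ with $N \geq 4$ rules out a plane, forcing $Y$ to be a scroll over a smooth curve.

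To close Case B, note that because $\phi$ is uncrumpled it cannot contract any curve, so $X = \P(\mathcal{E})$ is itself a $\P^1$-bundle over a smooth curve $C$ of genus $g$. The identities $K_X^2 = 8(1-g)$, $c_2(T_X) = 4(1-g)$, and $\zeta K_X = -\zeta^2 + (2g-2)$ (the last following from $\zeta = H + \pi^* D$, forced by $\zeta \cdot F = 1$ on a ruling fiber, together with adjunction) reduce formula (\ref{ppformula}) to
\begin{equation*}
    \pin{\pi \circ \phi} \;=\; 2\deg(Y) + 4g - 4.
\end{equation*}
The minimal-degree inequality $\deg(Y) \geq N - 1$ for non-degenerate surfaces now gives $\pin{\pi \circ \phi} \geq 2N - 6 + 4g \geq 2N - 6$, with equality if and only if $g = 0$ and $\deg(Y) = N - 1$, that is, if and only if $Y$ is a rational normal scroll. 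The main obstacle will be the rigorous execution of Case B: demonstrating that the \emph{only} generic failure mode for $\tilde\phi$ to be uncrumpled is the contraction of a line on $Y$ through $p$, and excluding subtler failures such as $\tilde\phi$ becoming non-birational through a multisecant structure or ramifying along a non-linear curve.
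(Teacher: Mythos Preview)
Your overall strategy---induct on $N$ via inner projection, use the $-4$ identity from Equation~(\ref{ppformula}) on the blow-up, and handle the ruled case separately via $\pin{\phi} = 2\deg Y + 4g - 4$---is exactly the paper's approach. The computations you sketch for the inductive step and for the scroll case match Lemmas~\ref{lem:inductivestep} and~\ref{lem:ruledcase} essentially verbatim.

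The gap you flag in Case~B is real, and your diagnosis of it is slightly off. You frame the failure of uncrumpledness as ``$\tilde\phi$ contracts a curve,'' and then deduce a line through $p$ from $\tilde\zeta\cdot C = 0$. But contraction is not the relevant failure mode: what actually needs to be ruled out is that $\tilde\phi$ ramifies along a curve $R$ that is \emph{not} contracted and need not map to a line. The paper handles this in Proposition~\ref{lem:finiteramification} by an incidence-correspondence argument: if $\tilde\phi$ ramifies along a curve for general $p$, one builds $\Sigma \subset X\times X$ of pairs $(p,q)$ with $\phi(p)$ on the tangent plane at $\phi(q)$, shows $\dim\Sigma = 3$, and then analyzes the second projection to produce either a two-dimensional family of plane curves on $Y$ (invoking Theorem~\ref{thm:toomanycurves}) or a tangent-plane degeneracy forcing $Y$ to be ruled via projective duality. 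Separately, birationality of $\tilde\phi$ (which you mention as a possible ``multisecant'' obstruction) is dispatched by the General Position Theorem (Lemma~\ref{lem:birationality}), and ramification along $E$ itself is excluded by a local second-fundamental-form calculation (Proposition~\ref{prop:ramexceptional}). None of these steps reduces to your $\tilde\zeta\cdot C = 0$ argument, so Case~B requires genuinely more input than your sketch provides.

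One minor point: the paper takes $N=3$ (where $\pin{\phi}\geq 0$ is trivial) as the base case, not $N=4$; this avoids having to do any separate numerical work there.
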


We conclude this section  by collecting several classical results needed in the proof. 

\begin{theorem}[General Position Theorem \cite{harris1979galois}, \cite{arbarello2011geometry}]
\label{thm:gpt}
Let $C\subset \P^r$,  $r \geq 2$, be an irreducible non-degenerate, possibly singular, curve of degree $d$. Then a general hyperplane meets $C$ in $d$ points, any $r$ of which are linearly independent.
\end{theorem}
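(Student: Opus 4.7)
The plan is to use the classical monodromy argument due to Harris, which establishes the Uniform Position Principle for irreducible curves, and then extract linear independence from transitivity together with non-degeneracy. First, by Bertini's theorem in characteristic zero, a general hyperplane $H$ meets $C$ transversally in exactly $d$ distinct points, all lying in the smooth locus of $C$. Letting $U \subset (\P^r)^*$ parametrize such hyperplanes, the incidence correspondence $\Gamma = \{(H,p) : p \in H \cap C\}$ is a finite \'etale cover of $U$ of degree $d$, with monodromy group $M \subseteq S_d$ acting on the fiber.

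The core step is to prove $M = S_d$. Two ingredients suffice: first, $M$ contains a transposition, obtained from a hyperplane simply tangent to $C$ at a single smooth point (the corresponding ordinary double-cover branch behavior yields a transposition via monodromy around that discriminant component); second, $M$ acts doubly transitively on $\{1,\dots,d\}$, a consequence of the irreducibility of the universal family of pairs of distinct intersection points. A doubly transitive subgroup of $S_d$ containing a transposition must be all of $S_d$. This is the substantive classical content cited in Harris \cite{harris1979galois}. Given $M = S_d$, the group acts transitively on $r$-element subsets of the fiber, so if any $r$-subset of $H \cap C$ were linearly dependent, then every $r$-subset would be, forcing $H \cap C$ to span a subspace $\Lambda_H \subseteq H$ of dimension at most $r-2$.

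To complete the argument, I would derive a contradiction with non-degeneracy from the assumption that such $\Lambda_H$ exists for every $H \in U$. Since $C$ is irreducible and $\Lambda_H$ is proper in $\P^r$, we have $C \not\subseteq \Lambda_H$, so $|\Lambda_H \cap C| \leq d$ by B\'ezout, and combined with $H \cap C \subseteq \Lambda_H \cap C$ this forces $\Lambda_H \cap C = H \cap C$. The rational map $H \mapsto \Lambda_H$ into the Grassmannian of $(r-2)$-planes, together with a dimension count over the $r$-dimensional parameter space $U$ and the fact that every point of $C$ lies in some $H \cap C$, forces $C$ into a proper linear subspace of $\P^r$, contradicting the non-degeneracy hypothesis. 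The main obstacle is the second step, the verification that $M = S_d$; this is where irreducibility of $C$ is essential and where the real geometric content lives, while the remaining steps are comparatively routine transitivity-and-dimension arguments.
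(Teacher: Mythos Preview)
The paper does not prove this statement; it is quoted as a classical result with citations to \cite{harris1979galois} and \cite{arbarello2011geometry}, and is used only as a black box in the proof of Lemma~\ref{lem:birationality}. There is therefore no proof in the paper to compare against.

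That said, your sketch is the standard argument from the cited references, and its architecture is correct: full monodromy $M=S_d$ yields transitivity on $r$-element subsets of a general hyperplane section, reducing the claim to exhibiting a single linearly independent $r$-tuple. The one place where your write-up is looser than it should be is the final paragraph. The rational map $H\mapsto\Lambda_H$ and the accompanying dimension count are not really needed and are left vague; a cleaner finish is to note that, since $C$ is non-degenerate, $r$ general points of $C$ are linearly independent (choose them one at a time, using at each step that $C$ is not contained in the span of the previously chosen points), and any such $r$-tuple lies in some hyperplane and hence, by irreducibility of the incidence variety of ordered $r$-tuples, forces the open condition to hold generically. Your B\'ezout step is fine but already subsumed by this. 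Also implicitly used is $d\ge r$, which follows from non-degeneracy; you may want to state it.
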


\begin{theorem}[Surfaces with a two-dimensional family of plane curves \cite{mezzetti1997tour}, \cite{sierra2007some}, \cite{segre1921superficie}]
\label{thm:toomanycurves}
The only surfaces $S \subset \P^N$, $N > 3$, containing a 2-dimensional family
of plane curves are: (1) the Veronese surface in $\P^5$, (2) any external projection of the Veronese surface to $\P^4$, (3) the smooth rational normal scroll
in $\P^4$, and (4) the cones.
\end{theorem}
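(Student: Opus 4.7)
The plan is to deploy an incidence-correspondence argument and then perform a case analysis based on the tangential behavior of the family at a general point of $S$. Let $T$ be the $2$-dimensional parameter variety, indexing plane curves $C_t \subset \Pi_t \subset \P^N$. The incidence variety
\[
\mathcal{C} \;=\; \{(p,t) \in S \times T : p \in C_t\}
\]
has dimension $3$, and the projection $\mathcal{C} \to S$ is surjective since the $C_t$ cover $S$; hence through a general point $p \in S$ there passes a $1$-parameter subfamily of plane curves $\{C_t\}_{t \in T_p}$, with corresponding planes $\Pi_t$ all containing $p$. At such $p$ the tangent lines $\ell_t := T_p C_t \subset T_p S$ trace out a $1$-parameter family inside the pencil of lines of $\P(T_p S)$, and the argument splits on whether the map $t \mapsto \ell_t$ is constant or dominant.

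Case A (tangent directions vary). The lines $\ell_t$ fill the projective tangent plane at $p$. I would then apply the General Position Theorem (Theorem \ref{thm:gpt}) to a general hyperplane section $S \cap H$: the $1$-parameter family of plane curves through $p$ cuts $H$ in a controlled configuration of collinear points inside each $\Pi_t \cap H$, and combining this with an infinitesimal study of how the planes $\Pi_t$ fit together near $p$ forces $\deg S$ to be as small as possible (equal to $N-1$ or $N-2$). The classification of minimal-degree surfaces \cite{eisenbud1987varieties} then identifies $S$ as the Veronese in $\P^5$, its (possibly singular) projection to $\P^4$, or the cubic scroll in $\P^4$.

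Case B (tangent directions constant). All $C_t$ through $p$ share a common tangent line $\ell_p$, and varying $p$ produces a global line field, and hence a foliation, on $S$. I would show each leaf is a line in $\P^N$: for a nearby $q$ on the same leaf, any $C_t$ through $p$ and $q$ has $\ell_p$ as its tangent at $p$ and a parallel $\ell_q$ at $q$, which (propagated along the leaf) forces the leaf to be straight. Hence $S$ is ruled by lines, and the fact that each ruling lies in some $\Pi_t$ from the family forces either concurrency of the rulings at a single vertex (so $S$ is a cone) or a degree constraint that collapses $S$ onto the rational normal scroll in $\P^4$.

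The most delicate step is Case A: tightly controlling the $3$-fold $V = \bigcup_{t \in T} \Pi_t$ from the combination of infinitesimal data at $p$ and global degree estimates is where most of the work lives. Ruling out exotic sweepings of $V$ by $2$-planes and pinning $S$ down to one of the listed minimal-degree surfaces is the core difficulty, handled by marrying the General Position Theorem with the classification of varieties of minimal degree.
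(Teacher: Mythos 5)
First, note that the paper does not prove Theorem \ref{thm:toomanycurves} at all: it is quoted as a classical result of Segre, with modern treatments in the cited references, so there is no internal proof to compare your argument against. Judged on its own merits, your proposal has genuine gaps. The most concrete one is in Case A: the claimed conclusion that the tangent-direction analysis ``forces $\deg S$ to be as small as possible (equal to $N-1$ or $N-2$)'' cannot be right. A non-degenerate irreducible surface in $\P^N$ always has degree at least $N-1$, so ``$N-2$'' is vacuous; more seriously, the projection of the Veronese surface to $\P^4$ has degree $4 = N$, hence is \emph{not} a variety of minimal degree, yet it carries a $2$-dimensional family of conics whose tangent directions at a general point vary. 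So this surface lands in your Case A but is invisible to the classification of varieties of minimal degree \cite{eisenbud1987varieties}; your reduction would therefore miss one of the surfaces the theorem is supposed to produce, which shows the degree estimate you are relying on is false as stated. The step that is supposed to establish it (``an infinitesimal study of how the planes $\Pi_t$ fit together near $p$'') is only announced, not carried out, and you yourself flag it as where ``most of the work lives.''

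Case B has a similar problem: from the constancy of the tangent line $\ell_p$ you assert a foliation whose leaves are lines, but an integral curve of a line field on a surface in $\P^N$ need not be a line, and the plane curves $C_t$ through $p$ sharing a tangent does not immediately straighten them; this requires an actual argument (in the classical treatments it comes from a careful study of the linear span of the union of the planes $\Pi_t$ and of the second fundamental form, not from the leaf-propagation heuristic). Likewise, the final dichotomy ``concurrency of rulings (cone) or a degree constraint collapsing $S$ to the scroll in $\P^4$'' is asserted without a mechanism. In short, the skeleton (incidence variety, one-parameter family of plane curves through a general point, dichotomy on tangent directions) is a reasonable opening, but the two steps that constitute the actual content of Segre's theorem are missing, and the degree-minimality claim that your Case A hinges on is contradicted by the projected Veronese surface in $\P^4$.
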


\begin{theorem}[Linearity of general Gauss fibers {\cite[I.2.3(c)]{zak1993tangents}}]
\label{thm:gaussfibers}
In characteristic $0$, the closure of a general fiber of the Gauss map of an irreducible projective variety is a linear space.
\end{theorem}

\begin{theorem}[A Consequence of Fulton-Hansen Connectedness {\cite[Theorem 3.4.1]{lazarsfeld2017positivity}}]
\label{thm:fulhanconsequence}
Let $X$ be a complete irreducible variety of dimension $n$, and let $f:X\to\P^r$ be an unramified morphism. If $2n > r$, then $f$ is a closed embedding.
\end{theorem}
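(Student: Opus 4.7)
The plan is to deduce the statement from the Fulton--Hansen connectedness theorem in its standard form: if $g: Z \to \P^r \times \P^r$ is a morphism from a complete irreducible variety with $\dim g(Z) > r$, then $g^{-1}(\Delta)$ is connected, where $\Delta \subset \P^r \times \P^r$ is the diagonal.

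First I would note that since $X$ is complete and $\P^r$ is separated, $f$ is proper; being unramified in addition forces the fibers to be discrete, hence finite, so $\dim f(X) = n$. Apply Fulton--Hansen to $f \times f : X \times X \to \P^r \times \P^r$: the hypothesis $2n > r$ yields $\dim (f \times f)(X \times X) = 2n > r$, and therefore the fiber product
\[
W \;:=\; (f \times f)^{-1}(\Delta) \;=\; X \times_{\P^r} X
\]
is connected. Now $W$ contains the image $\Delta_X$ of the relative diagonal $X \to X \times_{\P^r} X$. Since $X$ is separated, $\Delta_X$ is closed in $W$, while the key input is that because $f$ is unramified, this same relative diagonal is also an \emph{open} immersion (one of the standard equivalent characterizations of unramifiedness). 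Hence $\Delta_X$ is a clopen subset of the connected scheme $W$; containing at least one point of $W$, it must coincide with $W$. This is exactly the statement that $f$ is injective on points.

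To upgrade injectivity to a closed embedding, I would argue that $f : X \to f(X)$ is finite, bijective on points, and unramified, with $X$ irreducible, hence it has degree $1$ and is birational onto $f(X)$. A local Nakayama argument (using that in characteristic $0$ all residue-field extensions are trivial, together with $\Omega_{X/f(X)} = 0$) then shows that $\SO{f(X)} \to f_*\SO{X}$ is surjective, making $f : X \to f(X)$ an isomorphism. Combined with the fact that $f(X) \hookrightarrow \P^r$ is a closed immersion (from properness of $f$), we conclude that $f$ itself is a closed embedding. The main obstacle is really the Fulton--Hansen step, specifically verifying that $\Delta_X$ is open in the fiber product --- this is precisely where unramifiedness is essential, and without it (or some substitute hypothesis) the conclusion fails. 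Once injectivity is secured, the upgrade to a closed embedding is a routine local computation.
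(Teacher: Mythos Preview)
The paper does not prove this theorem; it is quoted as a background result with a citation to Lazarsfeld's \emph{Positivity}. So there is no in-paper proof to compare against. Your proposal is essentially the standard argument one finds in that reference: apply Fulton--Hansen to $f\times f$, use the clopen diagonal trick coming from unramifiedness, and then upgrade injectivity to a closed immersion.

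Two small remarks on the upgrade step. First, once you have established that the diagonal $X \to X\times_{\P^r} X$ is an isomorphism, you have shown that $f$ is a \emph{monomorphism} of schemes; since $f$ is also proper and of finite type, it is automatically a closed immersion. This is cleaner than the Nakayama route and avoids any appeal to residue-field considerations. Second, your parenthetical ``in characteristic $0$ all residue-field extensions are trivial'' is not literally true as stated; what you want is that over an algebraically closed base field the residue fields at closed points are all equal to $k$, which is the setting of the paper. Either fix the phrasing or bypass the issue entirely via the proper-monomorphism criterion.
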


\section{Proof of Theorem \ref{thm:ppbound}}
\label{sec:mainproof}

In this section, we maintain the conventions established in Section \ref{sec:defncon}, and the context of Setting \ref{innerprojectionsetting}. As we proceed to the proof of Theorem \ref{thm:ppbound}, we shall require a series of lemmas, which we establish presently. Our first pair of lemmas serve as observations about the inner projection construction, where we address the corresponding properties of uncrumpled maps. Note that Lemma \ref{lem:nondegeneracy} is stated without proof.

\begin{lemma}
    \label{lem:nondegeneracy}
    The image surface $\wt{\pi_y}(\wt{Y})\subset\P^{N-1}$ is again non-degenerate.
\end{lemma}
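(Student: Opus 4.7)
My plan is to argue by contradiction: assume $\wt{\pi_y}(\wt{Y})$ lies in some hyperplane $H \subset \P^{N-1}$, and derive that $Y$ itself lies in a hyperplane of $\P^N$, contradicting the non-degeneracy of $Y$ built into Setting \ref{innerprojectionsetting}. The mechanism is the standard correspondence for projection from a point.

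Viewing $\P^{N-1}$ as the space of lines in $\P^N$ through $y$, a hyperplane $H \subset \P^{N-1}$ is the locus of lines contained in a unique hyperplane $H' \subset \P^N$ with $y \in H'$, namely the cone over $H$ with vertex $y$ (which is a hyperplane because $H$ has codimension one). Under this correspondence, for any $p \in \P^N \setminus \{y\}$ we have $p \in H'$ if and only if $\pi_y(p) \in H$. Away from the exceptional divisor $E_y \subset \wt{Y}$, the morphism $\wt{\pi_y}$ coincides with $\pi_y \circ \beta_y$, so under the degeneracy assumption every $p \in Y \setminus \{y\}$ satisfies $\pi_y(p) \in H$, and therefore $p \in H'$. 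Passing to closures inside $\P^N$ yields $Y \subset H'$, the desired contradiction.

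The only point that requires care, and which I expect to be the main (minor) obstacle, is the role of the blow-up. A priori, the image of the exceptional divisor $E_y$ under $\wt{\pi_y}$ might contribute extra points not in $H$, making ``$\wt{\pi_y}(\wt{Y}) \subset H$'' appear stronger than what one can leverage. But $E_y$ is one-dimensional, so $\wt{\pi_y}(E_y)$ is at most a curve and cannot be a two-dimensional component of the image; consequently $\wt{\pi_y}(\wt{Y})$ and $\overline{\pi_y(Y \setminus \{y\})}$ agree as irreducible surfaces, and one is contained in a hyperplane if and only if the other is. With this observation in place, the contradiction argument above goes through verbatim, and the lemma follows.
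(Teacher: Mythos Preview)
Your argument is correct. The paper explicitly states this lemma without proof, so there is nothing to compare against; your contradiction argument via the cone $H' = \overline{\pi_y^{-1}(H)} \cup \{y\}$ is the standard and expected one.

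One small remark: your final paragraph about the exceptional divisor is unnecessary. You are assuming for contradiction that $\wt{\pi_y}(\wt{Y}) \subset H$, and this already forces $\pi_y(Y\setminus\{y\}) \subset H$ because $\wt{Y}\setminus E_y \cong Y\setminus\{y\}$ and $\wt{\pi_y}$ agrees with $\pi_y$ there. The image of $E_y$ plays no role in the deduction; you never need to compare $\wt{\pi_y}(\wt{Y})$ with $\overline{\pi_y(Y\setminus\{y\})}$ as sets.
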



\begin{lemma}
    \label{lem:birationality}
    The map $\wt{\pi_y}:\wt{Y}\to\P^{N-1}$ is birational onto its image $\wt{\pi_y}(\wt{Y})$.
\end{lemma}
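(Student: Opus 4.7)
My plan is to prove the lemma by contradiction. The assumption for contradiction will be that $\wt{\pi_y}$ is not birational onto its image; since $\beta_y$ is a birational isomorphism, this is equivalent to the rational inner projection $\pi_y$ having generic degree $k \geq 2$ onto its image. The strategy is to translate this hypothesis into a multisecant statement about $Y$ and then derive a contradiction from the General Position Theorem.

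First I would unpack the degree assumption. Because $y$ is a smooth point of $Y$ (by uncrumpledness and the generality of $y$), the fiber of $\pi_y$ over a point $\xi$ in its image is $\overline{y\xi}\cap Y\setminus\{y\}$ set-theoretically, so $\deg\pi_y \geq 2$ means that a generic line through $y$ which meets $Y$ at some second point must in fact meet $Y$ in $y$ together with at least two further points. Letting the generic $y$ and the generic second point vary independently, the pair $(y,q)$ should run over a dense open subset $U \subset Y\times Y$ on which the chord $\overline{yq}$ meets $Y$ in at least three distinct points. I expect this genericity bookkeeping---promoting the conclusion from ``generic fibers over a generic $y$'' to ``generic pairs in $Y\times Y$''---to be the main obstacle of the proof.

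I would then play this against a general codimension-two linear section. Let $\Lambda \cong \P^{N-2} \subset \P^N$ be general. Applying Bertini and the General Position Theorem (Theorem~\ref{thm:gpt}) to a general hyperplane section $C = Y\cap H$ (an irreducible non-degenerate curve of degree $d$ in $H \cong \P^{N-1}$), and then to a general hyperplane of $H$, yields that $\Lambda \cap Y$ consists of $d$ distinct points in $\Lambda$ of which any $N-1$ are linearly independent; since $N \geq 4$, any three of them are non-collinear. For generic $\Lambda$, a parameter count---codimension-two subspaces through any two points of $Y$ form a positive-dimensional family, so the incidence maps dominantly to $Y\times Y$---arranges that two of the intersection points $p_1, p_2 \in \Lambda\cap Y$ form a pair in $U$. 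By the first step the chord $\overline{p_1p_2}$ then contains a third distinct point $p_3 \in Y$, and since $\Lambda$ is a linear subspace, $\overline{p_1 p_2} \subset \Lambda$ forces $p_3 \in \Lambda \cap Y$, so $p_3 = p_j$ for some $j \geq 3$. This produces three distinct collinear points in $\Lambda \cap Y$, contradicting general position, and I will conclude $k = 1$.
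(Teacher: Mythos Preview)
Your approach is essentially the same as the paper's: assume the map has degree at least two, deduce that a general secant line to $Y$ is trisecant, and contradict the General Position Theorem applied to a codimension-two linear slice of $Y$. The only omission is that you tacitly assume $\wt{\pi_y}(\wt{Y})$ is two-dimensional when you pass from ``not birational onto its image'' to ``generic degree $k\geq 2$''; the paper disposes of the lower-dimensional image case separately by noting it would force $Y$ to be a cone with vertex at a general point, hence a $2$-plane.
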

    
\begin{proof}  
If $\dim\wt{\pi_y}(\wt{Y}) = 1$, then $Y$ is a cone with vertex $y$. Since this is true for a general $y\in Y$, then the line joining any two points on $Y$ is contained in $Y$, implying that $Y$ is an impossible 2-plane. As $\dim \wt{\pi_y}(\wt{Y}) = 0$ is clearly impossible, it follows that $\wt{\pi_y}(\wt{Y})$ must be $2$-dimensional.

Suppose $\deg\wt{\pi_y}>1$. Then a general secant line of $Y$ is trisecant. For a general codimension-two linear space $\Lambda\subset\P^N$, Theorem \ref{thm:gpt}, applied to a general hyperplane section of $Y$, says that $Y\cap\Lambda$ is in linear general position. Yet a general pair of its points lies on a trisecant contained in $\Lambda$, a contradiction.
\end{proof}

We wish to investigate the possibility that the map $\wt{\phi}$ is ramified at every point of the exceptional curve $E$. To that end, we prove a significantly more general result:

\begin{proposition}
    \label{prop:ramexceptional}
    Let $X$ be any surface, and let $\phi:X\to\P^N$ be birational onto its image $Y = \phi(X)$. If the inner projection map $\wt{\phi}:\wt{X}\to\P^{N-1}$ from a general point $y = \phi(x) \in Y$ is ramified along the exceptional curve $E\subset\wt{X}$, then $Y$ is a 2-plane.
\end{proposition}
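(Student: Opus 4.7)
The plan is to translate the hypothesis ``$\wt{\phi}$ is ramified along $E$'' into an explicit local condition, extract from it the vanishing of the (projective) second fundamental form of $Y$ at a general point, and then conclude that $Y$ is a $2$-plane.

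Since $\phi$ is birational, for general $x \in X$ the map $\phi$ is a local isomorphism at $x$ and $y = \phi(x)$ is a smooth point of $Y$. Choose affine coordinates $z_1, \ldots, z_N$ on $\A^N \subset \P^N$ with $y = 0$ and $T_y Y = \{z_3 = \cdots = z_N = 0\}$, so that $\phi$ locally takes the form
$$(z_1, z_2) \longmapsto \bigl(z_1,\, z_2,\, f_3(z_1, z_2),\, \ldots,\, f_N(z_1, z_2)\bigr),$$
with each $f_i \in (z_1, z_2)^2$. The degree-$2$ parts of the $f_i$ encode the second fundamental form $\mathrm{II}_y$ of $Y$ at $y$ as a quadratic form on $T_y Y$.

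Next I would describe $\wt{\phi}$ explicitly in the standard blow-up chart $(u, v)$ of $\wt{X}$ with $z_1 = u$ and $z_2 = uv$, so that $E = \{u = 0\}$ in this chart. Writing $f_i(u, uv) = u^2 g_i(u, v)$ with $g_i$ regular, and projecting from $y = [1 : 0 : \cdots : 0]$ in $\P^N$, one may clear a common factor of $u$ in the projective coordinates to obtain
$$\wt{\phi}(u, v) = [\, 1 : v : u\, g_3(u, v) : \cdots : u\, g_N(u, v) \,] \in \P^{N-1}.$$
Computing $d\wt{\phi}$ at a point $(0, v_0) \in E$ gives $d\wt{\phi}(\partial_v) = (1, 0, \ldots, 0)$ and $d\wt{\phi}(\partial_u) = (0, g_3(0, v_0), \ldots, g_N(0, v_0))$, so $\wt{\phi}$ is ramified at $(0, v_0)$ if and only if $g_i(0, v_0) = 0$ for all $i = 3, \ldots, N$.

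Each $g_i(0, v)$ is a polynomial of degree $2$ in $v$ whose coefficients reproduce (up to a factor of $1/2$) the quadratic form $\mathrm{II}_y$ evaluated on the tangent direction $(1, v) \in T_y Y$. The hypothesis that $\wt{\phi}$ is ramified along $E$ forces $g_i(0, v_0) = 0$ on the Zariski open subset of $E$ covered by this chart, hence $g_i(0, v) \equiv 0$ as a polynomial in $v$; consequently $\mathrm{II}_y$ vanishes as a quadratic form on $T_y Y$. Since $y$ was general in $Y$, $\mathrm{II}$ vanishes on a dense open subset of $Y$; the Gauss map of $Y$ is therefore locally constant, which forces $Y$ into a single projective $2$-plane. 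By irreducibility, $Y$ is that $2$-plane.

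The main technical hurdle is the explicit local description of $\wt{\phi}$ on the blow-up chart and the careful bookkeeping of the common $u$-factor produced by projecting from $y$; once those are in place, the rest reduces to linear algebra on $d\wt{\phi}$ and the classical fact that a variety with identically vanishing second fundamental form is linear. The single direction of $E$ missed by the $(u,v)$ chart is harmless: the vanishing of the polynomials $g_i(0, v)$ on the Zariski open part of $E$ inside the chart is already enough to force $\mathrm{II}_y \equiv 0$.
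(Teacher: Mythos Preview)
Your proof is correct and follows essentially the same route as the paper: choose local coordinates so that $\phi = [1:z_1:z_2:f_3:\cdots:f_N]$ with $f_i$ of order $\geq 2$, pass to a blow-up chart, compute $d\wt{\phi}$ along $E$, and read off that ramification along $E$ forces the degree-$2$ parts of the $f_i$ (i.e.\ the second fundamental form at $y$) to vanish, whence $Y$ is a $2$-plane. The only differences are cosmetic---your chart has $z_1=u,\ z_2=uv$ whereas the paper uses $s=tu$---and your exposition is slightly more explicit about identifying $g_i(0,v)$ with $\mathrm{II}_y$ on the direction $(1,v)$.
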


\begin{proof}
Since $x$ is chosen generally and $\phi$ is birational onto its image, we may assume that $x$ and $y$ are smooth points and that $\phi$ is an isomorphism in a neighborhood of $x$. Thus, locally near the exceptional curve $E$, the map $\wt{\phi}$ may be identified with the resolution $\wt{\pi_y}:\wt Y\to \P^{N-1}$.

The ramification $\Ram \wt\phi$ along $E=\P(T_yY)$ is measured by the second fundamental form of $Y$ at $y$. More precisely, for any tangent direction $[v]\in \P(T_yY)$ with corresponding point $\wt{[v]}\in E$, we have 
$$\rk d\wt\phi(\wt{[v]}) <2 \iff \operatorname{II}_{Y,y}(v,v)=0.$$

Since $\operatorname{II}_{Y,y}$ is symmetric and the characteristic is not $2$, the vanishing of $\operatorname{II}_{Y,y}(v,v)$ for every $v\in T_yY$ implies that $\operatorname{II}_{Y,y}=0$. As $y$ was chosen generally, the second fundamental form therefore vanishes at a general point. Equivalently, the differential of the Gauss map $\mathfrak g:Y\dashrightarrow \G(2,N)$ vanishes generically, so the Gauss map is constant on a dense open subset of $Y$. Hence the embedded tangent plane to $Y$ is fixed along a dense open subset. Since every smooth point of $Y$ lies in its embedded tangent plane, this dense open subset is contained in a fixed 2-plane. Taking the closure, $Y$ is a 2-plane.
\end{proof}

In the next proposition, we address the third property of uncrumpled maps, concerning whether $\wt{\phi}$ is finitely ramified.

\begin{proposition}
    \label{prop:finiteramification}
    Either $\wt{\phi}:\wt{X}\to\P^{N-1}$ is finitely ramified, or else $Y$ is ruled by lines.
\end{proposition}

\begin{proof} We begin by asserting that the ramification locus $\Ram(\wt{\phi})$ is properly contained inside $\wt{X}$. Indeed, if $\dim \Ram(\wt{\phi})=2$, then $\Ram (\wt{\phi})=\wt{X}$, since the ramification scheme of a map is a closed subscheme of the domain. Then $Y$ is a cone over a curve with vertex $\phi(x)$. But since $x$ is chosen generally, $Y$ is a degenerate 2-plane.

Suppose instead that $\wt{\phi}$ is ramified along a curve $R\subset\wt{X}$, not containing $E$ by Proposition \ref{prop:ramexceptional}. Then $\beta_x(R)\subset X$ is a curve. For any $p\in X\setminus\Ram(\phi)$, we define $\Lambda_p \subset \P^{N}$ to be the projective 2-plane $$\Lambda_p := \langle d\phi(T_pX)\rangle\subset \P^N,$$
the linear span of the Zariski tangent plane at $\phi(p)$. We also define the incidence variety $\Sigma$ as the closure of the set $$\left\{(p,q)\mid p,q\notin\text{Ram}(\phi),\; \text{and } \phi(p)\in\Lambda_q\setminus\{\phi(q)\}\right\}\subset X\times X.$$
Let $\pi_1,\pi_2:\Sigma\to X$ be the projection maps to the first and second factor respectively. A generic fiber of $\pi_1$ is one-dimensional; this is a restatement of the hypothesis that $\wt{\phi}$ is ramified along the curve $R$. By dimension counting, it follows that $\dim \Sigma = 3.$ 

But now consider the map $\pi_2$. The fiber $\pi_2^{-1}(q)$ over a point $q\in X$ is the set of points in $X$ whose image is contained in a line tangent to $Y$ at $\phi(q)$. Equivalently, $$\pi_2^{-1}(q) = \phi^{-1}(\Lambda_q\cap Y).$$ 
Again by dimension counting, we see that $\dim \left(\pi_2(\Sigma)\right)\geq 1$. If $\pi_2(\Sigma)$ is a curve $C\subset X$, then the fiber over a general point $c\in C$ will be two-dimensional. Since $\Ram(\phi)$ is finite, $\Lambda_c$ is a well-defined 2-plane. But if $\pi_2^{-1}(c)$ is two-dimensional, then $Y\subset\Lambda_c$, so $Y$ is again degenerate.

Suppose now that $\pi_2$ is dominant. A general fiber determines a plane-curve component $C_q\subset\Lambda_q\cap Y$, and these curves cover $Y$. Let $\mathscr F$ denote the resulting irreducible family of plane curves. If $\dim\mathscr F\geq2$, Theorem \ref{thm:toomanycurves} gives the Veronese surface, its external projection, a cubic scroll, or a cone. The inner projection of the Veronese surface is the cubic-scroll embedding. For an external projection from $o$, it is the projection of $v_2(\P^2)$ from $\ell=\langle o,v_2(x)\rangle$; since $\ell\not\subset\operatorname{Tan}(v_2(\P^2))$ and a rank-two conic has only finitely many linear factors, only finitely many tangent planes meet $\ell$. Thus this map is finitely ramified. The remaining cases are ruled by lines.
    
We must now investigate the possibility that $\dim \mathscr{F} = 1$. Then for a general point $q\in X$, there are infinitely many points in $X$ whose fiber under $\pi_2$ is $C_q$. Let $r\in X$ be general among such points. Then $C_q\subset\Lambda_q\cap\Lambda_r.$ If the planes $\Lambda_q$ and $\Lambda_r$ are distinct, then $C_q$ is their line of intersection, implying $Y$ is ruled by lines. If $\Lambda_q=\Lambda_r$, then the Gauss map of $Y$ has positive-dimensional general fibers. By Theorem \ref{thm:gaussfibers}, their closures are linear spaces. The general fiber cannot have dimension $2$, so their closures are lines covering $Y$. Hence $Y$ is ruled.
\end{proof}

Thus, from Lemmas \ref{lem:nondegeneracy} and \ref{lem:birationality} and Proposition \ref{prop:finiteramification} we deduce the following key fact which will be used in the proof of Theorem \ref{thm:ppbound}: 
\begin{theorem} 
\label{thm:uncrumpled}
If $\phi:X\to\P^N$ with $N\geq 4$ is an uncrumpled map on a smooth surface $X$ with $\phi(X)$ not ruled by lines, then the general inner projection $\wt{\phi}:\wt{X}\to\P^{N-1}$ is uncrumpled.
\end{theorem}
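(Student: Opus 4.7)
The plan is to verify that $\wt{\phi}$ satisfies each of the three bullet points in Definition \ref{def:uncrumpled}, assembling the three results just established. The serious work has effectively already been done in Lemma \ref{lem:nondegeneracy}, Lemma \ref{lem:birationality}, and Proposition \ref{lem:finiteramification}; what remains is organization and a modest factorization argument.

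For non-degeneracy, I would note that because $\phi$ is birational onto $Y$ and $\beta_x$ is birational, $\wt{\phi}(\wt{X})$ and $\wt{\pi_y}(\wt{Y})$ agree as closed subsets of $\P^{N-1}$: on the common open dense subset away from $E$ and $\beta_y^{-1}(y)$ the two maps are identified via $\phi$, and both images are closed and irreducible. Lemma \ref{lem:nondegeneracy} then yields non-degeneracy of $\wt{\phi}(\wt{X})$.

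For birationality, I would factor $\wt{\phi}$ through $\wt{\pi_y}$. Since $\phi$ is uncrumpled, it is finite and birational, hence an isomorphism over an open dense $U\subset Y$. With $x\in X$ chosen generally as in Setting \ref{innerprojectionsetting}, we may arrange that $y=\phi(x)\in U$ so that $\phi$ is a local isomorphism at $x$. The universal property of blow-ups then produces a morphism $\wt{\phi}_Y:\wt{X}\to\wt{Y}$ lifting $\phi$ and making the obvious squares with $\beta_x$ and $\beta_y$ commute, and this $\wt{\phi}_Y$ is birational because $\phi$ is. Since $\wt{\phi}=\wt{\pi_y}\circ\wt{\phi}_Y$, Lemma \ref{lem:birationality} and the composition of birational morphisms give the birationality of $\wt{\phi}$ onto its image.

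For finite ramification, I would invoke Proposition \ref{lem:finiteramification} directly: under the standing assumption that $Y$ is not ruled by lines, the proposition forces $\Ram(\wt{\phi})$ to be finite. These three verifications together show that $\wt{\phi}$ is uncrumpled. The only point at which anything beyond bookkeeping is required is the existence of the lift $\wt{\phi}_Y$ in the birationality step; this is the main (but still mild) obstacle, and it is resolved by the generality of $x$, which ensures $\phi$ is a local isomorphism at $x$ so that blowing up $X$ at $x$ is compatible with blowing up $Y$ at $y$.
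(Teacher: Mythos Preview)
Your proposal is correct and follows the same approach as the paper, which simply states Theorem \ref{thm:uncrumpled} as an immediate consequence of Lemmas \ref{lem:nondegeneracy}, \ref{lem:birationality}, and Proposition \ref{lem:finiteramification} without a separate proof. Your factorization argument (producing the lift $\wt{\phi}_Y:\wt{X}\to\wt{Y}$ via the generality of $x$) in fact fills in a detail the paper leaves implicit, namely how to transfer the conclusions of Lemmas \ref{lem:nondegeneracy} and \ref{lem:birationality}, which are stated for $\wt{\pi_y}$, to the map $\wt{\phi}$.
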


\begin{lemma}
    \label{lem:inductivestep}
    If $Y$ is not ruled by lines, then $\wt\gamma_2 = \gamma_2 - 4.$
\end{lemma}
    
\begin{proof} Since $Y$ is not ruled, $\wt{\phi}$ is uncrumpled by Theorem \ref{thm:uncrumpled}. To keep the notation manageable, we write $E = [E]$, $\beta = \beta_x$, and we omit the degree map on zero cycles. Recall the following familiar properties from intersection theory on the blow-up:
\begin{multicols}{3}
\begin{itemize}
    \item $ (\beta^*\zeta)^2 = \zeta^2$
    \item $ (\beta^*K_{X})^2 =  K_X^2$
    \item $(\beta^*\zeta)(\beta^*K_{X}) = \zeta\cdot K_X$
    \item $(\beta^*\zeta)\cdot E = 0$
    \item $(\beta^*K_X)\cdot E = 0$
    \item $E^2 = -1$
    \item $\wt{\zeta} = \beta^*\zeta - E$
    \item $K_{\wt{X}} = \beta^*K_X + E$
    \item $c_2(T_{\wt{X}}) = c_2(T_X) + 1$. 
\end{itemize}
\end{multicols}

Applying Equation \eqref{equation:pinch} to $\wt{\phi}$, we execute a straightforward computation:
\begin{align*}
    \wt\gamma_2 & = 6\wt{\zeta}^2 + 4\wt{\zeta}\cdot K_{\wt{X}} + K_{\wt{X}}^2 - c_2(T_{\wt{X}})\\[5 pt]
    & = 6(\beta^*\zeta - E)^2 + 4(\beta^*\zeta - E)\cdot(\beta^*K_X + E) + (\beta^*K_X + E)^2 - (c_2(T_X) + 1)\\[5 pt]
    & = \left(6\zeta^2 + 4\zeta\cdot K_X + K_X^2 - c_2(T_X)\right) - 4\\[5 pt]
    & = \gamma_2 - 4.
\end{align*}
\end{proof}

\begin{remark}
Lemma \ref{lem:inductivestep} generalizes \cite[Ex. 4]{sempleandroth1949} from the setting of a generic surface in $\P^r$ with $r > 4$ to any surface in $\P^N$ with $N\geq4$ which arose as the image of an uncrumpled map. Moreover, the technique we use in this proof is an intersection theoretic computation, rather than the purely geometric argument used in the literature.
\end{remark}

We now turn our attention to the situation where $Y$ is ruled by lines.

\begin{lemma} 
\label{lem:projectivebundle} Suppose that $Y$ is ruled by lines. Then there is a morphism $\rho:X\to B$ realizing $X$ as a $\P^1$-bundle over a smooth curve $B$, and $\phi$ maps each fiber of $\rho$ isomorphically onto a line in $Y$. In this case, we write $g$ for the genus of $B$ and $F_b=\rho^{-1}(b)$ for the fiber over $b\in B$. 
\end{lemma}

\begin{proof} 
Since $Y$ is ruled by lines, let $q:S\to B$ be the universal family of an irreducible one-dimensional covering family, with $B$ normalized. Then $S$ is a $\P^1$-bundle, and the evaluation $e:S\to Y$ maps each fiber isomorphically onto a line. If $\deg e>1$, the off-diagonal part of $S\times_Y S$ maps dominantly to $B\times B$, since two distinct lines meet in at most one point. Thus two general members meet. Such a one-dimensional family is planar or consists of the generators of a cone; the former contradicts non-degeneracy, while the latter has evaluation degree $1$. Hence $e$ is birational. Since $S$ is normal and $X$ is the normalization of $Y$, it factors as $e=\phi\circ\mu$ for a birational morphism $\mu:S\to X$.

We claim that $\mu$ is an isomorphism. Suppose to the contrary that $\mu$ contracts some curve $C\subset S$. The curve $C$ cannot be contained in a fiber of $q$, because every fiber of $q$ maps to a line in $Y$, hence is not contracted by $e=\phi\circ\mu$. Therefore $C$ dominates $B$, so every general fiber $F_b=q^{-1}(b)$ meets $C$. Hence the curves $\mu(F_b)\subset X$ all pass through the point $\mu(C)\in X$. Since these lines cover $Y$, it follows that $Y$ is a cone with vertex $\phi(\mu(C))$. Since $X$ is the smooth normalization of $Y$, it follows that $Y\subset \P^N$ is an impossible 2-plane. 

This contradiction shows that $\mu$ is an isomorphism. Transporting the bundle map $q:S\to B$ across $\mu$ gives the desired $\P^1$-bundle $\rho:X\to B$, and the assertion about the fibers follows from the construction. 
\end{proof}

\begin{lemma}
    \label{lem:ruledcase}
    If $Y$ is ruled by lines, then $\gamma_2 \geq 2N-6,$ with equality holding precisely when $Y$ is a rational normal scroll.
\end{lemma}

\begin{proof} By Lemma \ref{lem:projectivebundle}, $\rho:X\to B$ is a $\P^1$-bundle over a smooth base curve $B$ of genus $g$. Let $\zeta$ represent the class of a section of $\mathcal O_X(1).$ The Picard group $\Pic(X)$ is isomorphic to $\rho^*\Pic(B)\oplus\Z\cdot\langle\zeta\rangle$. The Whitney sum formula applied to the relative Euler exact sequence on $X$ shows that the canonical class $K_X = -2\zeta + \rho^*D$ for some divisor class $D$ on $B$.  It follows that $2\zeta + K_X$ is pulled back from $B$ and so $(2\zeta + K_X)^2 = 0.$ Since $\zeta^2 = d$, again by the relative Euler sequence we find $c_2(T_X) = 4-4g.$ Applying Equation \eqref{equation:pinch},
$$\gamma_2 = 6\zeta^2 + 4\zeta K_X + K_X^2 - c_2(T_X)  = (2\zeta + K_X)^2 + 2\zeta^2 - c_2(T_X).$$
Substituting $(2\zeta+K_X)^2 = 0$, $\zeta^2 = d$, and $c_2(T_X) = 4-4g$ yields 
\begin{equation}
\label{eq:pinruled}
\gamma_2 = 2d + 4g - 4.
\end{equation}

Since $g\geq0$, we obtain $\gamma_2\geq2N-6$, with equality if and only if $d=N-1$ and $g=0$. In that case $Y$ is a variety of minimal degree and, since it is ruled by lines, a rational normal scroll. Varieties of minimal degree are normal, so the finite birational morphism $\phi:X\to Y$ is an isomorphism. Hence $Y$ is a smooth rational normal scroll.
\end{proof}

\begin{remark}
Lemma \ref{lem:ruledcase} is used below as an exit from the induction. There is a parallel inductive formulation in the ruled case: after blowing up a general point $x\in X$ and contracting the strict transform of the ruling fiber through $x$, one obtains the elementary transform $\bar X$, and the value of the pinch-point invariant associated to the induced map $\bar\phi:\bar X\to\P^{N-1}$ is $\gamma_2-2$.
If $\bar\phi$ were known to be uncrumpled, this would give a uniform induction, where the pinch point number drops by $4$ in the non-ruled case and by $2$ in the ruled case. We instead use the direct ruled computation above, since the finite-ramification assertion for $\bar\phi$ is not formal and is analogous to the uncrumpledness issue addressed in Theorem \ref{thm:uncrumpled}.
\end{remark}

We now complete the proof of Theorem \ref{thm:ppbound}.

\begin{proof}
We proceed by induction on $N\geq 4$. First suppose $N=4$. If $Y$ is ruled, then Lemma \ref{lem:ruledcase} gives $\gamma_2\geq 2$, with equality precisely when $Y$ is a rational normal scroll. If $Y$ is not ruled, then Theorem \ref{thm:uncrumpled} and Lemma \ref{lem:inductivestep} imply that the resolved inner projection $\wt{\phi}:\wt X\to\P^3$ is finitely ramified and satisfies $\wt\gamma_2=\gamma_2-4$. Since any finitely ramified map to $\P^3$ has $\wt\gamma_2\geq 0$, we get $\gamma_2\geq 4$. Thus the theorem holds for $N=4$.

Now suppose $N>4$. If $Y$ is ruled, then Lemma \ref{lem:ruledcase} gives $\gamma_2\geq 2N-6$, with equality precisely when $Y$ is a rational normal scroll. Assume from now on that $Y$ is not ruled. By Theorem \ref{thm:uncrumpled}, the resolved inner projection $\wt{\phi}:\wt X\to\P^{N-1}$ is uncrumpled, and by Lemma \ref{lem:inductivestep}, $\wt\gamma_2=\gamma_2-4$. Applying the induction hypothesis to $\wt{\phi}$ gives
$$\gamma_2 = \wt\gamma_2+4 \geq (2(N-1)-6)+4 > 2N-6.$$
Thus equality cannot occur in the non-ruled case, and holds for scrolls as in Lemma \ref{lem:ruledcase}.
\end{proof}

\section{Generalization}
\label{sec:generalization}
In this section, we wish to strengthen the classification result of Theorem \ref{thm:ppbound}. The length of the pinch point scheme is always even (this is a consequence of N\"oether's theorem and Equation \eqref{equation:pinch}). A general projection of the Veronese surface to $\P^3$ admits 6 pinch points; this is the smallest non-minimal number for a surface in $\P^5$. It is natural to wonder, are there other surfaces which are close to minimal with respect to the length of their pinch point scheme? 

Theorem \ref{thm:generalization} below gives a characterization of surfaces $Y = \phi(X)$, where $\phi:X\to\P^N$ is an uncrumpled map on a smooth surface $X$ and where $N$ is large enough, such that the difference between $\gamma_2$ and $2N-6$ is fixed. Before we state and prove that result, Lemma \ref{lem:ruledinnerproj} characterizes surfaces whose inner projection is ruled. Note that we continue to maintain the context of Setting \ref{innerprojectionsetting} in this section.

\begin{lemma}
\label{lem:ruledinnerproj}
If $\wt{\phi}(\wt{X})$ is ruled by lines, then either $Y$ is ruled by lines or $Y$ is the Veronese surface in $\P^5$, or $Y$ is a projection of the Veronese surface to $\P^4$.
\end{lemma}

\begin{proof} 
By Theorem \ref{thm:uncrumpled}, either $Y$ is ruled by lines or $\wt{\phi}$ is uncrumpled. Assume we are in the latter case. Since $\wt{\phi}(\wt X)$ is ruled, $\rho:\wt X\to B$ is a $\P^1$-bundle over a smooth curve by Lemma \ref{lem:projectivebundle}. The exceptional curve $E\subset \wt X$ is not a fiber of $\rho$, since $E^2=-1$ while every fiber has self-intersection $0$. Hence $\rho(E)=B$. Since $E\cong \P^1$, we have $B\cong \P^1$. Thus $\wt X$ is a Hirzebruch surface, and since it contains a $(-1)$-curve, $\wt X\cong \mathbb F_1$. Contracting $E$ gives $X\cong \P^2$. 

Under this identification, the ruling on $\mathbb F_1$ is the pencil of lines in $\P^2$ through $x$. Let $H$ denote the pullback to $\wt X\cong\mathbb F_1$ of a line in $X\cong\P^2$, and write $\zeta=mH$. Writing $\zeta=mH$, a ruling fiber has class $H-E$ and degree $(mH-E)\cdot(H-E)=m-1=1$, so $m=2$. Hence $m=2$. Since $\wt{\phi}$ maps the fibers of this ruling to lines, the map $\phi:X\cong\P^2\to\P^N$ is induced by a linear system of conics. Therefore $N\in\{4,5\}$. If $N=5$, the system is complete, and $Y\subset\P^5$ is the Veronese surface. If $N=4$, then $Y\subset\P^4$ is a projection of the Veronese surface. 
\end{proof}

\begin{theorem}
\label{thm:generalization}
For any positive integer $i$, if $N\geq 3+i\geq 4$, then $\gamma_2 = 2N-6+2i$ if and only if
\begin{enumerate}
\item $Y$ is ruled by lines and $\deg Y = N-1+i-2g$; in particular, $g\leq \frac{i}{2}$, or
\item $N = 5$, $i = 1$, and $Y$ is the Veronese surface, or
\item $4\leq N\leq 9$, $i = N - 3$, and $Y$ is a del Pezzo surface of degree $N$.
\end{enumerate}
\end{theorem}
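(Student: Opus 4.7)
The plan is to establish the forward direction by induction on $i$, leveraging the inner projection construction and the classification in Corollary \ref{cor:ppbound}; the converse direction reduces to direct substitution into Equation (\ref{ppformula}).

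For the base case $i = 1$, assume $\pin{\phi} = 2N - 4$ with $N \geq 4$. If $Y$ is ruled, we land in case (1). Otherwise Theorem \ref{thm:uncrumpled} makes $\wt{\phi}$ uncrumpled and Lemma \ref{lem:inductivestep} gives $\pin{\wt{\phi}} = 2(N-1) - 6$. When $N \geq 5$, Corollary \ref{cor:ppbound} forces $\wt{Y}$ to be a rational normal scroll, and Lemma \ref{lem:ruledinnerproj} then forces $Y$ to be the Veronese in $\P^5$, so $N = 5$ and we are in case (2). When $N = 4$, $\pin{\wt{\phi}} = 0$, so $\wt{\phi}$ is unramified and hence a closed embedding by Theorem \ref{thm:fulhanconsequence}; an adjunction argument on smooth surfaces in $\P^3$ shows that a smooth rational curve of self-intersection $-1$ forces the ambient surface to be a cubic, so $\wt{X}$ is a smooth cubic and its contraction $X$ is $\P^2$ blown up at five general points. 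Comparing $\wt{\zeta} = \beta_x^*\zeta - E$ with the anticanonical class of the cubic yields $\zeta = -K_X$, so $Y$ is the del Pezzo of degree $4$ in $\P^4$, the $N = 4$, $i = 1$ instance of (3).

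For the inductive step, fix $i \geq 2$ with $N \geq 3 + i$ and assume the result for $i - 1$. Assume $Y$ is not ruled; inner projection yields an uncrumpled $\wt{\phi}$ with $\pin{\wt{\phi}} = 2(N-1) - 6 + 2(i-1)$, and since $N - 1 \geq 3 + (i-1) \geq 4$ the inductive hypothesis classifies $\wt{Y}$ into three subcases. (a) If $\wt{Y}$ is ruled, Lemma \ref{lem:ruledinnerproj} forces $Y$ to be ruled (contradicting the standing assumption) or to be the Veronese in $\P^5$, which would give $\pin{\phi} = 6$ and hence $i = 1$, contradicting $i \geq 2$. (b) If $\wt{Y}$ is the Veronese, then $\wt{\phi}$ is an isomorphism onto $\wt{Y}$ by Zariski's main theorem, so $\wt{X} \cong \P^2$ would have to contain the exceptional $(-1)$-curve $E$, an impossibility. (c) If $\wt{Y}$ is the del Pezzo of degree $N-1$, the same Zariski argument gives $\wt{X} \cong \P^2$ blown up at $10 - N$ points; contracting $E$ presents $X$ as $\P^2$ blown up at $9 - N$ points, and the hyperplane-class comparison again yields $\zeta = -K_X$, placing $Y$ as the del Pezzo of degree $N$ in $\P^N$. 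The apparent edge $N = 10$ is automatically vacuous, since it would require $\wt{X} \cong \P^2$.

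The converse is routine: for the Veronese and for the del Pezzo of degree $N$, substituting $\zeta = -K_X$ into Equation (\ref{ppformula}) yields $6$ and $4N - 12 = 2N - 6 + 2(N-3)$ respectively; for ruled $Y$, Lemma \ref{lem:ruledcase} gives $\pin{\phi} = 2 \deg Y + 4g - 4$, which takes the value $2N - 6 + 2i$ precisely when $\deg Y + 2g = N + i - 1$, and such ruled surfaces exist for all admissible $(N, i)$. The main obstacle is the $N = 4$ base, where Corollary \ref{cor:ppbound} is unavailable and one must instead invoke Fulton--Hansen connectedness together with the adjunction classification of $(-1)$-lines on surfaces in $\P^3$; a secondary subtlety, recurring at each inductive step, is confirming that the blow-down $\beta_x$ recovers the del Pezzo with the specific anticanonical polarization $\zeta = -K_X$ rather than some non-anticanonical hyperplane class.
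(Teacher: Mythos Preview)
Your argument is correct and rests on the same machinery as the paper's proof: the inner projection setup, Theorem~\ref{thm:uncrumpled}, Lemma~\ref{lem:inductivestep}, Lemma~\ref{lem:ruledinnerproj}, and the Fulton--Hansen base case with the adjunction computation singling out the cubic. The organization differs, however. The paper splits into two parallel inductions according to whether $N>3+i$ (leading only to ruled surfaces or the Veronese) or $N=3+i$ (the del Pezzo thread), whereas you run a single induction on $i$ and sort the three possibilities for $\wt{Y}$ at each step. Your packaging is arguably cleaner, and it forces you to dispose explicitly of the case $\wt{Y}$ Veronese (via the $(-1)$-curve obstruction on $\P^2$), which the paper's case split handles only implicitly. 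For the del Pezzo step the paper appeals to the preservation of linear normality under generic inner projection together with the degree drop, while you instead identify $\wt{\zeta}=-K_{\wt{X}}$ and pull back through $\wt{\zeta}=\beta_x^*\zeta-E$ to recover $\zeta=-K_X$; both arguments are valid, and yours has the virtue of pinning down the polarization directly. One small imprecision: when you write ``$\wt{X}$ is $\P^2$ blown up at $10-N$ points'' you should allow for the $\P^1\times\P^1$ model at degree~$8$, though this does not affect the conclusion that $Y$ is del Pezzo. Finally, you supply the converse direction, which the paper omits.
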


\begin{proof}
Suppose $\gamma_2 = 2N-6+2i.$ If $Y$ is ruled by lines, then by Lemma \ref{lem:ruledcase}, $\gamma_2=2d+4g-4.$ Comparing with $\gamma_2=2N-6+2i$ gives $d=N-1+i-2g$, and since $d\geq N-1$, it follows that $g\leq \frac{i}{2}$. 

Assume from now on that $Y$ is not ruled. By Theorem \ref{thm:uncrumpled}, $\wt{\phi}$ is uncrumpled, and 
\begin{equation}
\label{eq:generalizedinduction}
\wt\gamma_2 = \gamma_2 - 4 = 2N - 10 + 2i = 2(N-1) - 6 + 2(i-1)
\end{equation}
by Lemma \ref{lem:inductivestep}. We now have two cases.

\textbf{Case I.} Suppose $N\geq i+4$. We argue by induction on $i$, with the case $i=0$ given by Theorem \ref{thm:ppbound}. By Equation \eqref{eq:generalizedinduction}, the induction hypothesis with parameter $i-1$ implies that $\wt\phi(\wt X)$ is either ruled by lines, the Veronese surface in $\P^5$, or a del Pezzo surface of degree $N-1$. The del Pezzo case would require $i=N-3$, contrary to $N\geq i+4$, while the Veronese case would imply $\wt X\cong\P^2$, contradicting the existence of the exceptional $(-1)$-curve $E$. Thus $\wt\phi(\wt X)$ is ruled by lines. By Lemma \ref{lem:ruledinnerproj}, and since $Y$ is not ruled, $Y$ is either the Veronese surface in $\P^5$ or a projection of the Veronese surface to $\P^4$. The latter is excluded by $N\geq i+4\geq5$, while the former forces $N=5$ and $i=1$.

\textbf{Case II.} Suppose $N = 3 + i$, with $i\geq 1$. If $i = 1$, then $N = 4$, and therefore $\gamma_2 = 4$. By Equation \eqref{eq:generalizedinduction}, $\wt\gamma_2 = 0$, so $\wt{\phi}$ is an unramified map from a smooth surface to $\P^3$. By Theorem \ref{thm:fulhanconsequence}, $\wt{X}$ embeds into $\P^3$. Moreover, $\wt{\phi}(\wt{X})$ contains a line whose self intersection is $-1$, so by adjunction it must be a smooth cubic surface (a degree $3$ del Pezzo surface). Thus $\wt\phi(\wt X)$ is a del Pezzo surface of degree $N-1$.

Assume now that $i>1$. By induction, $\wt\phi(\wt X)$ is ruled by lines, the Veronese surface, or a del Pezzo surface of degree $N-1$. The Veronese case contradicts the exceptional $(-1)$-curve $E$. In the ruled case, Lemma \ref{lem:ruledinnerproj} forces $Y$ to be the Veronese surface in $\P^5$ or its projection to $\P^4$. The latter is excluded by $N\geq5$, while the former forces $(N,i)=(5,2)$, but then $\gamma_2=6\neq8$. Hence $\wt\phi(\wt X)$ is a del Pezzo surface of degree $N-1$.

Since $\wt\zeta=-K_{\wt X}$, the identities $\wt\zeta=\beta^*\zeta-E$ and $K_{\wt X}=\beta^*K_X+E$ give $\zeta=-K_X$. Moreover, $\zeta^2=\wt\zeta^2+1=N$. Thus $X$ is a del Pezzo surface of degree $N$. Since $h^0(X,-K_X)=N+1$, the non-degenerate map $\phi:X\to\P^N$ is the complete anticanonical embedding. Hence $Y$ is a del Pezzo surface of degree $N$, with $4\leq N\leq9$.

Conversely, in the ruled case the equality follows immediately from Lemma \ref{lem:ruledcase} and the relation $d = N-1+i-2g$. For the Veronese surface in $\P^5$, the pinch-point formula gives $\gamma_2=6=2\cdot 5-6+2$. If $Y\subset \P^N$ is a del Pezzo surface of degree $N$, then $\zeta=-K_Y$, $\zeta^2=N$, and $c_2(T_Y)=12-N$, so by Equation \ref{equation:pinch},
$\gamma_2=2N-6+2(N-3).$
\end{proof}

\section{Projective characters and further questions}
\label{sec:futurework}

Let $X^n\subset\P^N$ be a smooth non-degenerate projective variety, and let $\mathfrak g:X\to\G(n,N)$ be its Gauss map. For each partition $\lambda\vdash n$, let $\sigma_\lambda^*$ denote the corresponding dual Schubert class. We may write
$$\mathfrak g_*[X]=\sum_{\lambda\vdash n}\gamma_\lambda\sigma_\lambda^*.$$
The integers $\gamma_\lambda$ are the projective characters of $X$. They measure different aspects of the variation of the embedded tangent spaces of $X$, and each gives rise to a natural extremal problem: determine the sharp lower bound for $\gamma_\lambda$ and classify the varieties attaining it.

For surfaces, the two projective characters are $\gamma_2$ and $\gamma_{1,1}$. The invariant studied in this paper is $\gamma_2$, while $\gamma_{1,1}$ is the class of the surface. Thus Theorem \ref{theorem:simplified} determines the sharp lower bound for the one-row character $\gamma_2$. At the opposite extreme, the one-column character $\gamma_{\mathbf 1^n}$ is the top polar degree; its vanishing is the classical dual-defect problem studied by Griffiths--Harris, Ein, and others.

The natural higher-dimensional analogue of the present paper concerns the one-row character $\gamma_n$. Write $\Theta_X\subset\P^N$ for its tangent variety. When $\dim\Theta_X=2n$, this character is closely related to the ramification scheme of a general projection of $X$ to $\P^{2n-1}$ and satisfies $\gamma_n=\deg(\eta)\deg\Theta_X,$ where $\eta$ is the natural map from the abstract tangential variety to $\Theta_X$ as in \cite{gomez2026tangentdegreedegreetangent}. A direct computation gives $\gamma_n=2(N-2n+1)$ for every rational normal scroll of dimension $n$. Equality is not confined to scrolls in higher dimensions: for example, $\P^1\times Q^{n-1}\subset\P^{2n+1}$ also satisfies $\gamma_n=4$.

\begin{problem}
\label{problem:projectivecharacters}
Let $X^n\subset\P^N$ be a smooth non-degenerate variety with $\dim\Theta_X=2n$. Determine the sharp lower bound for $\gamma_n$ and classify the varieties attaining it.
\end{problem}

There is also a variational formulation of the problem. A general projection of $X$ to $\P^{2n-1}$ is parametrized by the Grassmannian $\G(N-2n,N)$, which has dimension $2n(N-2n+1)$. If the corresponding ramification scheme is reduced of length $p$, then it determines a point of the symmetric product $X^p/S_p$, which has dimension $np$. Consequently, if first order deformations of the projection always induced non-trivial first order deformations of the ramification cycle, then one would obtain $p\geq 2(N-2n+1)$. This would give another possible proof of the results of \cite{gomez2026tangentdegreedegreetangent} recalled above.

Moving to the enumerative world, the calculation that $\gamma_n=2(N-2n+1)$ for a rational normal scroll $X\subset\P^N$ opens up a collection of challenging enumerative problems. By keeping track of the ramification scheme of the projection, we obtain the projection-ramification map
$$\Phi:\G(N-2n,N)\dashrightarrow X^{2(N-2n+1)}/S_{2(N-2n+1)},$$
a rational map between two schemes of the same dimension. And so we pose:

\begin{problem}
\label{prob:hilbdegree}
For each rational normal scroll $S(a_1,\dots,a_n)$, determine the degree of the projection-ramification map $\Phi$.
\end{problem}

Problem \ref{prob:hilbdegree} is already interesting for surfaces. For the scroll $S(1,2)$ in $\P^4$, $\deg\Phi=1$, because two generic points on the cubic scroll $S(1,2)$ uniquely determine a point of projection: one simply intersects the two tangent planes of $S(1,2)$ at those points. In $\P^5$, there is a material difference between the two quartic scrolls $S(2,2)$ and $S(1,3)$. In general, the indeterminacy locus of $\Phi$ can be complicated, and therefore makes the computation of the degree of $\Phi$ challenging. The directrix contribution can become more complicated as the scroll becomes more imbalanced, although the family $S(1,b)$ admits the Wronski reduction described below. Problems \ref{problem:projectivecharacters} and \ref{prob:hilbdegree} should be compared to the study undertaken in \cite{deopurkar2020ramification}.

\begin{remark}
When $X$ is a rational normal curve, the map $\Phi$ is a morphism, called the Wronski map, and its degree is the Catalan number 
$$\deg \Phi = \frac{1}{N}\binom{2N-2}{N-1};$$ 
see \cite{eremenko2002rational}. Similarly, for the scrolls $S(1,b)\subset \P^{b+2}$, quotienting by the directrix line reduces Problem \ref{prob:hilbdegree} to the Wronski map for the rational normal curve of degree $b$. Consequently,
$$\deg\Phi_{S(1,b)}=\frac{1}{b}\binom{2b-2}{b-1}.$$
\end{remark}

\section*{Acknowledgements}

We are very grateful to Fyodor Zak and Ragni Piene for their detailed comments, alternative perspectives, and encouragement. We also thank Ciro Ciliberto and Antonio Lanteri for their helpful insights. We are grateful to C\'esar Lozano-Huerta, Emilia Mezzetti, Flaminio Flamini, Igor Dolgachev, and Izzet Coskun for their encouragement, comments, suggestions and questions. We thank the anonymous reviewer for many detailed insights and recommendations. 


\vspace{9pt}

\textbf{Adam Cartisano} 

Department of Mathematics, Computer Science and Data Science

Belmont University, Nashville, TN, USA 

Email: adam.cartisano@belmont.edu 

\vspace{9pt}

\textbf{Anand Patel} 

Department of Mathematics

Oklahoma State University, Stillwater, OK, USA 

Email: anand.patel@okstate.edu

\end{document}